\newfont{\Bbb}{msbm10 scaled\magstephalf}
 \newtheorem{thm}{Theorem}[section]
 \newtheorem{cor}[thm]{Corollary}
 \newtheorem{prop}[thm]{Proposition}
 \theoremstyle{definition}
\theoremstyle{remark}
 \newtheorem{rem}[thm]{Remark}
 \newtheorem{exm}[thm]{Example}
 \numberwithin{equation}{section}
\newcommand{\pf}{\begin{proof}}
\newcommand{\zb}{\end{proof}}
\newcommand{\ma}{\mathcal}
\begin{document}
\title[Invariance and near invariance]{Invariance and near invariance for non-cyclic shift semigroups}
\author[Y. Liang]{Yuxia Liang}
\address{Yuxia Liang \newline School of Mathematical Sciences,
Tianjin Normal University, Tianjin 300387, P.R. China.} \email{liangyx1986@126.com}
\author[J. R. Partington]{Jonathan R. Partington}
\address{Jonathan R. Partington \newline School of Mathematics,
  University of Leeds, Leeds LS2 9JT, United Kingdom.}
 \email{J.R.Partington@leeds.ac.uk}
\subjclass[2020]{47A15, 30H10, 47B38.}
\keywords{invariant subspace, nearly invariant subspace, shift, simultaneous invariance, Blaschke product, Toeplitz operator}
  \begin{abstract}
We characterize the subspaces of $H^2(\mathbb D)$ that are 
invariant under the semigroups generated by two higher-order shifts $S^m$ and $S^{n}$.
Complete descriptions are obtained for both invariant and nearly invariant
subspaces associated with these operators and their adjoints. The approach
is based on vector-valued Hardy spaces, the Beurling--Lax theorem, and
matrix-valued inner functions. Finally, we apply these results to
non-cyclic shift semigroups and to Toeplitz operators induced by finite
Blaschke products.
\end{abstract}

\maketitle

\section{Introduction}
Invariant subspaces of operators on Hilbert spaces have been a central
topic connecting operator theory, complex analysis, and functional
analysis. A fundamental result is Beurling's theorem, which characterizes
the invariant subspaces of the unilateral shift $S$ on $H^2(\mathbb D)$
as $\theta H^2(\mathbb D)$, where $\theta$ is an inner function
(see, e.g., \cite[A.~Cor.~1.4.1]{Nik}).

For an isometry $T$ on $H^2(\mathbb{D})$, a closed subspace $\mathcal M$ is called nearly
$T^*$-invariant if
\[
f\in T(H^2)\cap\mathcal M\quad\Longrightarrow\quad T^*f\in\mathcal M .
\]
In particular, nearly $S^*$-invariant subspaces have been extensively
studied by Hayashi, Hitt and Sarason in connection with Toeplitz kernels and model
spaces; see \cite{Ha,hitt,Sa1,Sa2} and related developments
\cite{ABBH,ALS,CaP1,CaP,CaP2,CCP10,GMR,HR,LP5,ryan}.  More recently, the notion of nearly invariant subspaces has been extended
to left invertible operators and $C_0$ semigroups, leading to a broader
framework for studying invariant structures of operator semigroups
\cite{LP2,LP3,LP4}. 

Motivated by these developments, we study a different type of invariance:
simultaneous invariance under distinct powers of the unilateral shift.
Although Beurling's theorem gives a complete description for $S$,
the structure of subspaces invariant under two higher powers
$S^m$ and $S^n$ with $n>m$ remains largely unexplored. In particular,
we ask whether there exist nontrivial subspaces invariant under both
operators but not invariant under $S$.

Simple examples show that such phenomena do indeed occur. For instance,
the closed span generated by the set
$\{z^{2n+3m}:m,n\in\mathbb N_0\}$ is invariant under both $S^2$ and
$S^3$, but is neither $S$-invariant nor nearly $S^*$-invariant.
Similar examples arise for other pairs of powers. These observations
indicate that simultaneous higher-order invariance possesses a richer
structure than the classical shift-invariant theory.

The main purpose of this paper is to provide a complete characterization
of these subspaces. Our approach is based on the identification of
$H^2(\mathbb D)$ with vector-valued Hardy spaces through the isometric
isomorphism $T_m$ in \eqref{Tfgm}, together with the Beurling--Lax theorem and
matrix-valued inner functions.

A key operator in this analysis is the matrix-valued isometry
\begin{eqnarray} \Sigma_{m,\gamma}^k:=\left(\begin{array}{cc}
O_{\gamma\times (m-\gamma)} & S^{k+1}I_{\gamma\times \gamma} \\
S^kI_{(m-\gamma)\times (m-\gamma)}& O_{(m-\gamma)\times \gamma}
\end{array}\right)_{m\times m},\label{Sigmam0}\end{eqnarray} where $m \geq 2, \ k \geq 1 \ \text{and} \ \gamma \in \{1, 2, \cdots, m-1\}.$
Here $O_{m\times n}$ and $I_{m\times n}$ denote the
zero and identity matrices, respectively. Besides, let $H^{\infty}_{r \times r}$ be the collection of all bounded analytic $r\times r$ matrix-valued functions for $r\geq1$.

The paper is organized as follows. Section 2 characterizes the subspaces invariant under $S^2$ and $S^{2k+1}$ and provides examples illustrating the difference from classical $S$-invariance. Section 3 develops the
corresponding theory of nearly invariant subspaces for the adjoint
operators. Section 4 extends the results to general pairs
$S^m$ and $S^{km+\gamma}$ and applies them to non-cyclic shift
semigroups. Finally, we transfer these characterizations to Toeplitz
operators induced by finite Blaschke products.

\section{Invariance under both $ S^2$ and $S^{2k+1}$}
In this section, we characterize the subspaces invariant under both
$S^{2}$ and $S^{2k+1}$ for $k\geq1$ by using the matrix
$\Sigma_{2,1}^{k}$ (corresponding to $m=2$ and $\gamma=1$ in
\eqref{Sigmam0}) and the Beurling--Lax theorem on vector-valued Hardy
spaces.

We briefly recall that the vector-valued Hardy space
$H^2(\mathbb D,\mathbb C^n)$ consists of analytic functions
$F=(f_1,\cdots,f_n)$ with $f_j\in H^2(\mathbb D)$.
An analytic function
$\Phi:\mathbb D\to\mathcal L(\mathbb C^r,\mathbb C^n)$ is called inner
if $\Phi\in H^\infty(\mathbb D,\mathcal L(\mathbb C^r,\mathbb C^n))$
and $\Phi(e^{it})$ is an isometry almost everywhere on $\mathbb T$.
We shall use the following form of the Beurling--Lax theorem
(see, e.g., \cite{Lax,Par}).

\begin{thm}\label{Beurling-Lax} A subspace
  $\ma{K}$ of $ H^2(\mathbb{D}, \mathbb{C}^n)$ is  $S$-invariant   if and only if  $\ma{K}=\Theta_{n\times r} H^2(\mathbb{D}, \mathbb{C}^r)$, where $0\leq r\leq n$ and $\Theta_{n\times r}$ is an operator-valued inner function with values in $\ma{L}(\mathbb{C}^r, \mathbb{C}^n)$. 
\end{thm}

Meanwhile, the corresponding model space $K_{\Theta_{n\times r}}:=H^2(\mathbb{D}, \mathbb{C}^n)\ominus \Theta_{n\times r} H^2(\mathbb{D}, \mathbb{C}^r)$ is $S^*$-invariant for $0\leq r\leq n$.

 Although the $S^2$-invariant subspaces have been studied by several
approaches (see, e.g., \cite{GGP22,LS97}), we adopt a vector-valued
Hardy space framework. Indeed, every $f\in H^2(\mathbb D)$ admits the
decomposition $f(z)=f_0(z^2)+zf_1(z^2)$ with $f_0,f_1\in H^2(\mathbb D)$. Thus the mapping \begin{eqnarray}T_2 (f_0(z),f_1(z))=f_0(z^{2})+z f_1(z^{2})\label{Tfg}\end{eqnarray}  defines an isometric isomorphism from
$H^2(\mathbb D,\mathbb C^2)$ onto $H^2(\mathbb D)$. Since $$ S^2( f_0(z^2)+zf_1(z^2))=(Sf_0)(z^2)+z(Sf_1)(z^2),$$    the following commutative diagram  holds:
  \begin{eqnarray}\begin{CD}
H^2(\mathbb{D},\mathbb{C}^2)@> S>>H^2(\mathbb{D},\mathbb{C}^2)\\
@VV T_2     V @VV T_2 V\\
H^2(\mathbb{D}) @>S^2>> H^2(\mathbb{D}).
\end{CD}\label{commute10}\end{eqnarray}

Hence, $S^2$ on $H^2(\mathbb D)$ is equivalent, via $T_2$, to the  vector-valued shift
$S$ on $H^2(\mathbb D,\mathbb C^2)$. Theorem \ref{Beurling-Lax} then yields
the following description of $S$-invariant subspaces. 

\begin{prop}\label{inv-subspace} A subspace $\ma{N}$ of  $H^2(\mathbb{D}, \mathbb{C}^2)$ is $S$-invariant  if and only if  $\ma{N}=\Theta_{2\times r} H^2(\mathbb{D},\mathbb{C}^r)$, where  $0\leq r\leq 2$ and  $\Theta_{2\times r}$ is an operator-valued inner function with values in $\ma{L}(\mathbb{C}^r, \mathbb{C}^2)$. \end{prop}

By Proposition \ref{inv-subspace}, we can proceed to the $S^2$ and $S^{2k+1}$-invariant subspace in $H^2(\mathbb{D})$.

\begin{thm}\label{thm Sk2}   For $k\geq 1,$ a subspace $\ma{M}$ of $H^2(\mathbb{D})$  is invariant under both  $S^2$ and $S^{2k+1}$  if and only if $\ma{M}=T_2(\Theta_{2\times r} H^2(\mathbb{D},\mathbb{C}^r))$, where $0\leq r\leq 2$, the inner function $\Theta_{2\times r}$ takes values in $\ma{L}(\mathbb{C}^r,
\mathbb{C}^2)$ and satisfies $\Theta_{2\times r}^{*} \Sigma_{2,1}^k \Theta_{2\times r} \in H^{\infty}_{r \times r}$, and  $T_2$ is an isometric isomorphism as  in \eqref{Tfg}. \end{thm}

\begin{proof}
We first prove the sufficiency. By Proposition \ref{inv-subspace},
$\Theta_{2\times r}H^2(\mathbb D,\mathbb C^r)$ is invariant under the vector-valued shift. Hence, by \eqref{commute10},
$ \mathcal M=T_2(\Theta_{2\times r}H^2(\mathbb D,\mathbb C^r)) $
is $S^2$-invariant.

For $F\in H^2(\mathbb D,\mathbb C^r)$, we have
\[
T_2(\Theta_{2\times r}F)
=
(1,z)(\Theta_{2\times r}F)(z^2).
\]
Therefore,
\[
\begin{aligned}
S^{2k+1}T_2(\Theta_{2\times r}F)
&=
(z^{2k+1},z^{2k+2})
(\Theta_{2\times r}F)(z^2)
\\
&=
(1,z)
\left(\Sigma_{2,1}^k\Theta_{2\times r}F\right)(z^2).
\end{aligned}
\]
Since
$\Theta_{2\times r}^{*}\Sigma_{2,1}^k\Theta_{2\times r}\in
H^\infty_{r\times r}$,
the multiplier theorem for vector-valued Hardy spaces
(\cite[Theorem 4.14]{AM02}) gives
\[
\Sigma_{2,1}^k\Theta_{2\times r}H^2(\mathbb D,\mathbb C^r)
\subset
\Theta_{2\times r}H^2(\mathbb D,\mathbb C^r).
\]
Therefore
$S^{2k+1}\mathcal M\subset\mathcal M$, proving the sufficiency.

Conversely, let $\mathcal M$ be invariant under both $S^2$ and $S^{2k+1}$.
By Proposition \ref{inv-subspace},
\[
\mathcal M
=
T_2(\Theta_{2\times r}H^2(\mathbb D,\mathbb C^r))
\]
for some inner matrix function $\Theta_{2\times r}$.

The $S^{2k+1}$-invariance is equivalent to
\[
\Sigma_{2,1}^k\Theta_{2\times r}
H^2(\mathbb D,\mathbb C^r)
\subset
\Theta_{2\times r}
H^2(\mathbb D,\mathbb C^r).
\]
Hence
$\Theta_{2\times r}^{*}\Sigma_{2,1}^k\Theta_{2\times r}$
acts boundedly on
$H^2(\mathbb D,\mathbb C^r)$.
Applying again
\cite[Theorem 4.14]{AM02},
we conclude that
\[
\Theta_{2\times r}^{*}\Sigma_{2,1}^k\Theta_{2\times r}
\in H^\infty_{r\times r},
\]
which completes the proof.
\end{proof}

By Theorem \ref{thm Sk2},  a function $f_0(z^2)+zf_{1}(z^2)\in \ma{M}^{\perp}$ if and only if
\begin{align*}
 \left\langle (f_{0},  f_1 )^{\mathsf t}, \Theta_{2\times r} F \right\rangle=0\;\mbox{for all}\;  F  \in H^{2}(\mathbb{D},\mathbb{C}^r).  \end{align*}

Now we conclude the  result  on  $(S^2)^*$ and $(S^{2k+1})^*$-invariant subspaces.
\begin{prop}\label{prop S*k2} For $k\geq 1,$ a subspace $\ma{N}$ of $H^2(\mathbb{D})$ is invariant under both  $(S^2)^*$ and $(S^{2k+1})^*$ if and only if  $ \ma{N}=T_2(K_{\Theta_{2\times r}}),$ where $0\leq r\leq 2$, the inner function $\Theta_{2\times r}$ takes values in $\ma{L}(\mathbb{C}^r,
\mathbb{C}^2)$ and satisfies $\Theta_{2\times r}^{*} \Sigma_{2,1}^k \Theta_{2\times r} \in H^{\infty}_{r \times r}$, and  $T_2$ is an isometric isomorphism as  in \eqref{Tfg}.\end{prop}
 Letting $k=1$ in Theorem \ref{thm Sk2} and Proposition \ref{prop S*k2}, the characterizations on both $S^2$ and $S^3$-invariant subspace, and  both $(S^2)^*$ and $(S^3)^*$-invariant subspace follow. Two nontrivial  both $S^2$  and $S^3$-invariant subspaces are presented, which are not invariant under $S.$

\begin{exm}($1$) For $r=2$ and let  $\Theta_{2\times 2}=
\left(\begin{array}{lll}
\theta & 0 \\
0 & \phi
\end{array}\right) $, where $\theta$ and $\phi$ are nonconstant inner functions in $H^2(\mathbb{D})$. Theorem \ref{thm Sk2} indicates that the $S^2$ and $S^3$-invariant subspace $\ma{M}\subset H^2(\mathbb{D})$ can be expressed as
\begin{eqnarray*}
\ma{M}=\{f(z)= f_0(z^{2})+zf_1(z^{2}),\; \Theta_{2\times 2}^{*} \Sigma_{2,1}^1 \Theta_{2\times 2} \in H^{\infty}_{2 \times 2},\;f_0\in \theta H^{2}(\mathbb{D}), f_1\in \phi H^2(\mathbb{D})\}.\label{Mfg}
 \end{eqnarray*} The condition $$\Theta_{2\times 2}^{*} \Sigma_{2,1}^1 \Theta_{2\times 2} =\left(\begin{array}{cc}
0 & \theta^*S^2\phi \\
\phi^* S\theta & 0
\end{array}\right)\in H^{\infty}_{2 \times 2}$$ yields the inner functions $\theta$ and $\phi$  satisfy the divisions $\theta|S^2 \phi$ and $\phi| S\theta.$

Hence we further assume $\theta(z)=z^{j} \psi(z)$ and $\phi(z)=z^{k} \psi(z)$, where $\psi$ is inner, $\psi(0) \neq 0$ and $j \leq k+2$ and $k \leq j+1$, so that $-1 \leq j-k \leq 2$. That is, the matrix $\Theta_{2\times 2}$ can be
{\small  \begin{eqnarray}
\psi(z)\left(\begin{array}{lll}
z^k & 0 \\
0 & z^k
\end{array}\right), \;
\psi(z)\left(\begin{array}{lll}
z^{k+1} & 0 \\
0 & z^k
\end{array}\right),\;
\psi(z)\left(\begin{array}{lll}
z^{k+2} & 0 \\
0 & z^k
\end{array}\right),\;
\psi(z)\left(\begin{array}{lll}
z^{k} & 0 \\
0 & z^{k+1}
\end{array}\right)\label{Theta}
\end{eqnarray}}with inner $\psi(0)\neq 0$ and integer $ k\geq 0.$ Then  there are four types of $S^2$ and $S^3$-invariant subspace corresponding to $\Theta_{2\times 2}$  in \eqref{Theta}. But they are  not $S$-invariant. To be precise, let $f(z)=f_0(z^2)+zf_1(z^2)\in \ma{M}$; it follows that
\begin{align*}&Sf(z)=zf_0(z^2)+(Sf_1)(z^2)\in \ma{M}\implies \binom{Sf_1}{f_0}\in \Theta_{2\times 2}H^2(\mathbb{D},\mathbb{C}^2)\\& \implies \left(\begin{array}{lll}
S & 0 \\
0 & 1
\end{array}\right)\binom{f_1}{f_0}\in \Theta_{2\times 2}H^2(\mathbb{D},\mathbb{C}^2)\implies\Theta_{2\times 2}^*\left(\begin{array}{lll}
S & 0 \\
0 & 1
\end{array}\right)=\left(\begin{array}{lll}
z\theta^* & 0 \\
0 & \phi^*
\end{array}\right)\in H_{2\times 2}^\infty,\;\;\;\; \end{align*}
which is an contradiction.

Meanwhile, Proposition \ref{prop S*k2} illustrates there are four types of $(S^2)^*$ and $(S^3)^*$-invariant subspace, which can be expressed as
 \begin{eqnarray*}
\ma{M}^{\perp}=\{f(z)=f_0(z^{2})+z f_1(z^{2}),\;(f_{0},  f_1 )  \in K_{\Theta_{2\times 2}}\;\mbox{with}\; \Theta_{2\times 2}\;\mbox{in}\; \eqref{Theta}\}.
 \end{eqnarray*}

($2$) For $r=1$ and let $\Theta_{2\times 1}(z)=\frac{1}{\sqrt{5}}\left(\begin{array}{lll}
\theta  \\
 2\theta
\end{array}\right)$ with nonconstant inner function $\theta$,  Theorem \ref{thm Sk2} and Proposition \ref{prop S*k2} imply the nontrivial $S^2$ and $S^3$-invariant subspace is
\begin{eqnarray}\ma{M} =\{(1 + 2z)f(z^2):\; f\in\theta H^2(\mathbb{D})\},\label{ex2}\end{eqnarray}
and the $(S^2)^*$ and $(S^3)^*$-invariant subspace behaves as
\begin{eqnarray*}
\ma{M}^{\perp}=\{f(z)=f_0(z^{2})+z f_1(z^{2}),\;(f_{0},  f_1 ) \in K_{\Theta_{2\times 1}}\}.
 \end{eqnarray*}

However, the subspace $\ma{M}$ in \eqref{ex2} is not invariant under $S$. This is because
\begin{eqnarray*}&&S(1+2z)f(z^2)=2(Sf)(z^2)+zf(z^2)\in \ma{M}\implies \binom{2S}{1}f\in \Theta_{2\times 1}H^2(\mathbb{D})\\&&\implies \Theta_{2\times 1}^*\binom{2S}{1}=2(z+1)\theta^* \in H^\infty, \end{eqnarray*} which is a contradiction. \end{exm}

\begin{rem}  Invariance under $S^{2}$ and $S^{3}$ implies invariance under all powers of $S$ except perhaps $S$ itself, so this would be the smallest class of this type strictly containing the Beurling-type spaces. \end{rem}

\section{ near invariance under both $(S^{2})^{*}$ and $(S^{2k+1})^*$}
Let $\ma{M}$ be a nearly $(S^{2})^{*}$ and $(S^{2k+1})^*$-invariant subspace for $k\geq 1$. Set $$X:=\ma{M}\ominus (\ma{M}\cap S^2 H^2(\mathbb{D})).$$  Let $ \hat{k}_0(z)$ and $\hat{k}_1(z)$  be the orthogonal projections of $1$  and $z$ on $X$, respectively.  We apply the Gram--Schmidt orthogonalization to the set $\{ \hat{k}_0(z),  \hat{k}_1(z)\}$ and then normalize them to obtain an orthonormal set denoted  by $\{e_0(z), e_1(z)\}$, i.e., $E_2(z)$ is the $2\times 1$ matrix whose columns are the orthonormalization of the reproducing kernel and  the first-derivative kernel for $\ma{M}$ at $0.$  Denote by $P_2$  the orthogonal projection onto   $X$ and    \begin{eqnarray}E_2(z)=\left(e_0, \;e_1
 \right)^{\mathsf t}(z).\label{E2e}\end{eqnarray}
Since $\ma{M}$ is nearly $(S^2)^*$-invariant,  every $f\in \ma{M}$ admits the decomposition
$$f(z)=(a_0,\;b_0)E_2(z)+S^2 f_1(z).$$ Considering $f_1\perp e_0$ and $f_1\perp e_1$, the near $(S^2)^*$-invariance of $\ma{M}$ implies  $f_1\in \ma{M}$ and $f_1(z) =(S^2)^* (I-P_2)f(z)$. Meanwhile,
 $\|f\|^2=|a_0|^2+|b_0|^2+\|f_1\|^2.$  Then we repeat this process for $f_1$, getting that
\begin{eqnarray}f_1(z)=(a_1,\;b_1)E_2(z)+S^2 f_2(z)\label{f1}\end{eqnarray} with $f_2\in \ma{M}$, $f_2(z)=(S^2)^* (I-P_2)f_1(z)= [(S^2)^* (I-P_2)]^2f(z)$, and
 $\|f_1\|^2=|a_1|^2+|b_1|^2+\|f_2\|^2.$
Continuing recursively, we obtain
$$f_j(z)=(a_j,\;b_j)E_2(z)+S^2 f_{j+1}(z)$$ with $f_{j+1}\in \ma{M}$, $f_{j+1}(z)=[(S^2)^* (I-P_2)]^{j+1}f(z)$,  and  $\|f_j\|^2=|a_j|^2+|b_j|^2+\|f_{j+1}\|^2.$
 Iterating these decompositions gives
$$ f(z)=\left(\sum_{l=0}^{j}a_l z^{2l},\;\sum_{l=0}^jb_lz^{2l}\right)E_2(z)
+z^{2(j+1)}f_{j+1}(z)$$ and $$\|f\|^2=\sum_{l=0}^j |a_l|^2+\sum_{l=0}^j|b_l|^2+\|f_{j+1}\|^2.$$

We claim that $\|f_{j+1}\|\rightarrow 0$ as $j\rightarrow \infty$. Since  $(S^2)^*$ is $C_{0.}$ and the range of the projection $P_2$ is two-dimensional, 
\cite[Lemma 3.3]{BT00} implies that $ (I-P_2)(S_2)^*$ is $C_{0.}.$ This together with \([(S^2)^* (I-P_2)]^{j+1}= (S^2)^*[(I-P_2)(S_2)^*]^{j} (I-P_2)\) imply that  $(S^2)^* (I-P_2)$ is $C_{0.}$, i.e., $\| [(S^2)^*(I-P_2)]^{j+1} f\|\rightarrow 0$ as $j\rightarrow \infty,$ and hence  
\begin{eqnarray}f(z)=\left(\sum_{l=0}^{\infty}a_l (z^{2})^{l},\;\sum_{l=0}^\infty b_l(z^{2})^{l}\right)E_2(z),\;z\in\mathbb{D},\label{S2}\end{eqnarray} where the sums converge in $H^2(\mathbb{D})$ norm and \begin{eqnarray*}\|f\|^2=\sum_{l=0}^\infty|a_l|^2+
\sum_{l=0}^\infty|b_l|^2.\end{eqnarray*}
Meanwhile, observing from  \eqref{f1} and \eqref{S2}, it follows that \begin{eqnarray}f_1(z)=\left(\sum_{l=0}^{\infty}a_{l+1} (z^{2})^{l},\;\sum_{l=0}^\infty b_{l+1}(z^{2})^{l}\right)E_2(z) \in \ma{M}.\label{f11}\end{eqnarray}
Then $f\in \ma{M}$ if and only if $f(z)=\left(k_0, \;k_1\right)(z^2)E_2(z)$ with \begin{eqnarray}\|f\|^2=\|k_0\|^2+\|k_1\|^2,\label{norm}\end{eqnarray}
where $(k_0,\; k_1)(z)$ lies in a subspace $K\subset H^2(\mathbb{D},\mathbb{C}^2),$ with $k_0(z)=\sum_{l=0}^\infty a_lz^l$ and
$k_1(z)=\sum_{l=0}^\infty b_lz^l$. By \eqref{norm}, we see that $K\subset H^2(\mathbb{D},\mathbb{C}^2)$ is closed. Furthermore, \eqref{f11} implies $$f_1(z)= \left(S^*k_0,\; S^*k_1\right)(z^2)E_2(z)\in \ma{M},$$ that is to say $(S^*k_0,\;S^*k_1)(z)\in K$. So $K$ is  $S^*$-invariant in $H^2(\mathbb{D},\mathbb{C}^2)$.

Conversely, assume that \begin{align}\ma{M}=\left\{\left(k_0,\; k_1\right)(z^2)E_2(z) :\; (k_0, \;k_1)(z)\in K\right\}, \label{spaceM}\end{align}   where $K$  is  $S^*$-invariant in $H^2(\mathbb{D},\mathbb{C}^2)$. Then $\ma{M}$ is nearly $(S^2)^*$-invariant.

By Theorem \ref{Beurling-Lax}, 
 $K=K_{\Theta_{2\times r}}:=H^2(\mathbb{D},\mathbb{C}^2)\ominus \Theta_{2\times r}H^2(\mathbb{D},\mathbb{C}^r),$  where  $0\leq r\leq 2$ and the inner function $\Theta_{2\times r}\in H^\infty(\mathbb{D},\ma{L}(\mathbb{C}^r,\mathbb{C}^2))$. By \eqref{spaceM}, there is a surjective isometry  $J_2:\;\mathcal{M}\rightarrow K_{\Theta_{2\times r}}$ defined by
$$J_2\left(G(z^2) E_2(z)\right)=G(z)\;\mbox{with}\;G\in H^2(\mathbb{D},\mathbb{C}^2),$$ which is unitary.  We obtain the following characterization.

\begin{prop}\label{prop nearS2} A subspace $\ma{M}$ of $H^2(\mathbb{D})$ is nearly $(S^2)^*$-invariant if and only if  there exists a unitary map
\begin{eqnarray*}J_2:\;\mathcal{M}\rightarrow K_{\Theta_{2\times r}}\;\; \mbox{defined by}\;\;G(z^2) E_2(z)\mapsto G(z),\label{J2k}\end{eqnarray*} where the inner function $\Theta_{2\times r}$ takes values in $\ma{L}(\mathbb{C}^r,\mathbb{C}^2)$  for  $0\leq r\leq 2$.
\end{prop}

Using Proposition \ref{prop nearS2}, the simultaneously near $(S^2)^*$ and $(S^{2k+1})^*$-invariant subspaces are obtained as below, which also provides all nearly invariant subspaces for the non-cyclic shift semigroup $\{(S^*)^{2n} (S^*)^{(2k+1)m}:\;m, n\in \mathbb{N}_0,\;k\geq 1\}$ generated by two elements $(S^*)^{2}$ and $(S^*)^{2k+1}$.
\begin{thm}\label{nearS2k1}For $k\geq 1,$  a subspace $\ma{M}$ of $H^2(\mathbb{D})$  is nearly $(S^2)^*$  and $(S^{2k+1})^*$-invariant if and only if  there exists a unitary map 
\begin{eqnarray*}J_2:\;\mathcal{M}\rightarrow K_{\Theta_{2\times r}}\;\; \mbox{defined by}\;G(z^2) E_2(z)\mapsto G(z), \end{eqnarray*} where the inner function $\Theta_{2\times r}$ takes values in $\ma{L}(\mathbb{C}^r,
\mathbb{C}^2)$ and satisfies $\Theta_{2\times r}^{*} \Sigma_{2,1}^k \Theta_{2\times r} \in H^{\infty}_{r \times r}$ for  $0\leq r\leq 2$.  
 \end{thm}

\begin{proof}
We first prove the sufficiency. By Proposition \ref{prop nearS2}, there exists a unitary map
\(J_2:\mathcal M\to K_{\Theta_{2\times r}}\) such that every
\(f\in\mathcal M\) can be written as
\[
f(z)=(k_0,k_1)(z^2)E_2(z),
\]
where \((k_0,k_1)\in K_{\Theta_{2\times r}}\). And $\ma{M}$ is nearly $(S^2)^*$-invariant.

Assume that \(f^{(n)}(0)=0\) for \(0\le n\le 2k\).
Then the definition of \(E_2\) in \eqref{E2e} yields
\[
(S^{2k+1})^*f(z)
=
\big((S^{k+1})^*k_0,(S^k)^*k_1\big)(z^2)
\begin{pmatrix}
S&0\\
0&S^*
\end{pmatrix}
E_2(z).
\]

Thus, under the unitary map \(J_2\), it remains to verify that
\[
\big((S^{k+1})^*k_0,(S^k)^*k_1\big)(z)
\begin{pmatrix}
S&0\\
0&S^*
\end{pmatrix}
\in K_{\Theta_{2\times r}} .
\]

By the definitions of \(E_2\) in \eqref{E2e} and $T_2$ in \eqref{Tfg}, the above condition is equivalent to
\begin{align}
(\Sigma_{2,1}^{k})^*
(k_0,k_1)^{\mathsf t}
\in K_{\Theta_{2\times r}}.\label{aimsk}
\end{align}

For any \(F\in H^2(\mathbb D,\mathbb C^r)\), it follows that
\[
\begin{aligned}
 \left\langle
(\Sigma_{2,1}^{k})^*
(k_0,k_1)^{\mathsf t},
\Theta_{2\times r}F
\right\rangle =
\left\langle
(k_0,k_1)^{\mathsf t},
\Sigma_{2,1}^{k}\Theta_{2\times r}F
\right\rangle .
\end{aligned}
\]

Since
\(\Theta_{2\times r}^{*}\Sigma_{2,1}^{k}\Theta_{2\times r}
\in H^\infty_{r\times r}\), we obtain
\[
\Sigma_{2,1}^{k}\Theta_{2\times r}F
=
\Theta_{2\times r}
(\Theta_{2\times r}^{*}\Sigma_{2,1}^{k}\Theta_{2\times r})F
\in
\Theta_{2\times r}H^2(\mathbb D,\mathbb C^r).
\]
Therefore the above inner product is zero, because
\((k_0,k_1) \in K_{\Theta_{2\times r}}\). Hence \eqref{aimsk} holds
and consequently \(\mathcal M\) is nearly
\((S^{2k+1})^*\)-invariant.

Conversely, assume that \(\mathcal M\) is nearly invariant with respect
to both \((S^2)^*\) and \((S^{2k+1})^*\).
By Proposition \ref{prop nearS2}, there exists a unitary map
\(J_2:\mathcal M\to K_{\Theta_{2\times r}}\), where
\(\Theta_{2\times r}\) is inner and \(0\le r\le2\).   The near invariance under \((S^{2k+1})^*\) implies that
\[
(\Sigma_{2,1}^{k})^*
(k_0,k_1)^{\mathsf t}
\in K_{\Theta_{2\times r}}
\]
for every \((k_0,k_1) \in K_{\Theta_{2\times r}}\).
Hence, for every \(F\in H^2(\mathbb D,\mathbb C^r)\),

\[
\left\langle
(k_0,k_1)^{\mathsf t},
\Sigma_{2,1}^{k}\Theta_{2\times r}F
\right\rangle=0 .
\]

Since \(K_{\Theta_{2\times r}}^\perp
=\Theta_{2\times r}H^2(\mathbb D,\mathbb C^r)\), it follows that
$ \Sigma_{2,1}^{k}\Theta_{2\times r}F
\in
\Theta_{2\times r}H^2(\mathbb D,\mathbb C^r).$ 
Therefore
\(\Theta_{2\times r}^{*}\Sigma_{2,1}^{k}\Theta_{2\times r}\)
is a bounded multiplier on
\(H^2(\mathbb D,\mathbb C^r)\), which gives
\[
\Theta_{2\times r}^{*}\Sigma_{2,1}^{k}\Theta_{2\times r}
\in H^\infty_{r\times r}.
\]
The proof is complete.
\end{proof}

We next represent two nontrivial  nearly $(S^2)^*$ and $(S^3)^*$-invariant subspaces by  $\Sigma_{2,1}^1=\left(\begin{array}{ll}0& S^2\\S &0\end{array}\right)$. These examples show that simultaneous near invariance with
respect to higher order shifts may occur even when the classical $S^*$-near invariance fails.
\begin{exm}
First, let
\(
\mathcal M_1=
\{a(1+z+z^2)+bz(1+2z):a,b\in\mathbb C\}.
\)
Then $\mathcal M_1$ is not nearly $S^*$-invariant. A direct computation
gives
\[
E_2(z)=
\begin{pmatrix}
(5+2z-z^2)/\sqrt{30}\\
z(1+2z)/\sqrt5
\end{pmatrix},
\]
and
\[
\mathcal M_1=\{(a, b)E_2(z):a,b\in\mathbb C\}.
\]
Hence the unitary identification
$J_2:\mathcal M_1\to K_{\Theta_{2\times1}}$ is given by
$(a,b)E_2(z)\mapsto(a,b)$, where
\[
\Theta_{2\times1}
=\frac1{\sqrt2}
\begin{pmatrix}z\\z\end{pmatrix}.
\]
Since
\[
\Theta_{2\times1}^{*}\Sigma_{2,1}^{1}\Theta_{2\times1}
=\frac{z+z^2}{2}\in H^\infty,
\]
Theorem \ref{nearS2k1} implies that $\mathcal M_1$ is nearly
$(S^2)^*$  and $(S^3)^*$-invariant.

Similarly, consider
\(
\mathcal M_2=
\{(a+bz+cz^3)(1+z):a,b,c\in\mathbb C\}.
 \)
Although $\mathcal M_2$ is not nearly $S^*$-invariant, we have
\[
E_2(z)=
\begin{pmatrix}
(2-z)(1+z)/\sqrt6\\
z(1+2z)/\sqrt5
\end{pmatrix}
\]
and
\[
\mathcal M_2=
\{(\alpha+\beta z^2,\gamma-2\beta z^2)E_2(z):
\alpha,\beta,\gamma\in\mathbb C\}.
\]
The corresponding model space is
\(
K_{\Theta_{2\times1}}
=
\{(\alpha+\beta z,\gamma-2\beta z):
\alpha,\beta,\gamma\in\mathbb C\},
\)
where
\[
\Theta_{2\times1}
=
\frac1{\sqrt2}
\begin{pmatrix}z^2\\z^2\end{pmatrix}.
\]
Again,
\[
\Theta_{2\times1}^{*}\Sigma_{2,1}^{1}\Theta_{2\times1}
=\frac{z+z^2}{2}\in H^\infty,
\]
and hence $\mathcal M_2$ is nearly
$(S^2)^*$  and $(S^3)^*$-invariant by Theorem \ref{nearS2k1}.
\end{exm}

\section{invariance and near invariance under high order shifts}

In this section, we study the (nearly) invariant subspaces associated with
two higher-order shifts \(S^m\) and \(S^n\), where \(n>m\). We first
characterize the subspaces invariant under both \(S^m,S^n\) and their
adjoints. The essential cases are \(n=mk+\gamma\), where
\(k\geq1\) and \(\gamma\in\{1,\cdots,m-1\}\).    For
\(m\geq3\), every \(f\in H^2(\mathbb D)\) has the decomposition
\[
f(z)=\sum_{l=0}^{m-1}z^l f_l(z^m),
\]
where \(f_l\in H^2(\mathbb D)\). This induces the isometric isomorphism
  $T_m: H^2(\mathbb{D},\mathbb{C}^m)\rightarrow H^2(\mathbb{D})$ by \begin{eqnarray}T_m (f_0(z),f_1(z),\cdots, f_{m-1}(z))=\sum_{l=0}^{m-1}z^lf_{l}(z^m)\label{Tfgm}\end{eqnarray} with $(f_0,\;\cdots,\; f_{m-1})(z) \in H^{2}(\mathbb{D},\mathbb{C}^m).$  Since
 $$ S^m\left( \sum_{l=0}^{m-1}z^lf_{l}(z^{m})\right)=\sum_{l=0}^{m-1}z^l(Sf_{l})(z^{m}).$$  Then the  commutative diagram below holds:
  \begin{eqnarray}\begin{CD}
H^2(\mathbb{D},\mathbb{C}^m)@> S>>H^2(\mathbb{D},\mathbb{C}^m)\\
@VV T_m     V @VV T_m V\\
H^2(\mathbb{D}) @>S^m>> H^2(\mathbb{D}).
\end{CD}\label{commutem}\end{eqnarray}   Using   Theorem \ref{Beurling-Lax}, the $S$-invariant subspace in $H^2(\mathbb{D},\mathbb{C}^m)$
is expressed as $\Theta_{m\times r} H^2(\mathbb{D},\mathbb{C}^r)$ with $\Theta_{m\times r}$  is an operator-valued inner function with values in $\ma{L}(\mathbb{C}^r, \mathbb{C}^m)$ and $0\leq r\leq m$.

\subsection{$S^m$ and $S^{km+\gamma}$-invariant subspaces} In this subsection, we  continue illustrating  the  $S^m$ and $S^{km+\gamma}$-invariant subspaces in $H^2(\mathbb{D})$.

\begin{thm}\label{thm Sm} For $m\geq 3$,  $k\geq 1$ and $\gamma\in \{1,\cdots, m-1\}$, a subspace $\ma{M}$  of $H^2(\mathbb{D})$  is invariant  under both $S^m$ and $S^{km+\gamma}$ if and only if $ \ma{M}=T_m(\Theta_{m\times r} H^2(\mathbb{D},\mathbb{C}^r))$, where  $0\leq r\leq m$, the inner function $\Theta_{m\times r}$ takes values in $\ma{L}(\mathbb{C}^r,
\mathbb{C}^m)$ and satisfies  $\Theta_{m\times r}^{*} \Sigma_{m,\gamma}^k \Theta_{m\times r} \in H^{\infty}_{r \times r}$, and  $T_m$ is an isometric isomorphism as in \eqref{Tfgm}. \end{thm}

\begin{proof}
\emph{Sufficiency.} By Proposition \ref{inv-subspace}, 
$\Theta_{m\times r}H^2(\mathbb D,\mathbb C^r)$ is invariant under the
vector-valued shift. Hence, using \eqref{commutem}, the space
\[
\mathcal M=T_m(\Theta_{m\times r}H^2(\mathbb D,\mathbb C^r))
\]
is invariant under $S^m$.  It behaves as \begin{eqnarray*}
\mathcal M=\left\{(1,\;z,\; \cdots,\;z^{m-1}) \left(\Theta_{m\times r}F\right)(z^m): F \in H^{2}(\mathbb{D},\mathbb{C}^r)\right\}.
 \end{eqnarray*}
Meanwhile, we have that
\begin{eqnarray}
 &&S^{km+\gamma}\mathcal M \nonumber\\&=&\left\{(z^{km+\gamma},\; z^{km+\gamma+1},\; \cdots,\; z^{(k+1)m+\gamma-1}) \left(\Theta_{m\times r}F \right)(z^m):\; F \in H^{2}(\mathbb{D},\mathbb{C}^r)\right\}\nonumber\\&=&
 \left\{( 1, \; z, \; \cdots, \; z^{m-1}) \left(\Sigma_{m,\gamma}^k\Theta_{m\times r}F \right)(z^m): F  \in H^{2}(\mathbb{D},\mathbb{C}^r)\right\}\quad \quad\label{STNm}
 \end{eqnarray}with the inner $m\times m$ matrix  $\Sigma_{m,\gamma}^k$ in \eqref{Sigmam0}. 
Since 
$\Theta_{m\times r}^{*}\Sigma_{m,\gamma}^{k}\Theta_{m\times r}
\in H^\infty_{r\times r}$, the multiplier characterization of
vector-valued Hardy spaces (\cite[Theorem 4.14]{AM02}) implies that
\[
\Sigma_{m,\gamma}^{k}\Theta_{m\times r}
H^2(\mathbb D,\mathbb C^r)
\subset
\Theta_{m\times r}H^2(\mathbb D,\mathbb C^r).
\]
Consequently,
$S^{km+\gamma}\mathcal M\subset\mathcal M$, and hence $\mathcal M$
is invariant under both $S^m$ and $S^{km+\gamma}$.

Conversely, suppose that $\mathcal M$ is   both
$S^m$ and $S^{km+\gamma}$-invariant. By Proposition \ref{inv-subspace}, there
exists an inner matrix function $\Theta_{m\times r}$ such that
\[
\mathcal M=T_m(\Theta_{m\times r}H^2(\mathbb D,\mathbb C^r)).
\]
Using the calculations in \eqref{STNm}, the
$S^{km+\gamma}$-invariance of $\mathcal M$ is equivalent to
\[
\Sigma_{m,\gamma}^{k}\Theta_{m\times r}
H^2(\mathbb D,\mathbb C^r)
\subset
\Theta_{m\times r}H^2(\mathbb D,\mathbb C^r).
\]
Therefore,
$\Theta_{m\times r}^{*}\Sigma_{m,\gamma}^{k}\Theta_{m\times r}$
defines a multiplier on $H^2(\mathbb D,\mathbb C^r)$.
Applying again the multiplier characterization
(\cite[Theorem 4.14]{AM02}), we conclude that
$\Theta_{m\times r}^{*}\Sigma_{m,\gamma}^{k}\Theta_{m\times r}
\in H^\infty_{r\times r}$.
This completes the proof.
\end{proof}

Based on Theorem \ref{thm Sm}, $\ma{M}^{\perp}$ is invariant under $(S^{m})^{*}$ and $(S^{km+\gamma})^*$, i.e.,
 $ \ma{M}^\perp=T_m(K_{\Theta_{m\times r}})$ under the conditions in Theorem \ref{thm Sm}.

\subsection{Near invariance under both $(S^m)^*$ and $(S^{km+\gamma})^*$}  Let $\ma{M}$ be nearly invariant under both $(S^m)^*$ and $(S^{km+\gamma})^*$, with $m\geq3$, $k\geq1$, and $\gamma\in\{1,\cdots,m-1\}$. Define
\[
X_m=\ma{M}\ominus(\ma{M}\cap S^mH^2(\mathbb D)),
\qquad
E_m(z)=(e_0,\cdots,e_{m-1})^{\mathsf t}(z),
\]
where $\{e_j\}_{j=0}^{m-1}$ is obtained by orthonormalizing the reproducing kernel and its first $m-1$ derivative kernels of $\ma{M}$ at $0$. We also set
\[
A(l)=(a_0^l,\cdots,a_{m-1}^l),\qquad l\geq0 .
\]

Firstly, considering the near $(S^m)^*$-invariance,  an arbitrary $f\in \ma{M}$ has the form
$$f(z)= A(0)E_m(z)+S^m f_1(z).$$ Since $f_1\perp e_i$ for $i=0, 1,\cdots, m-1$, the near $(S^m)^*$-invariance implies  $f_1\in \ma{M}$. Meanwhile, it holds that
 $\|f\|^2=|A(0)|^2+\|f_1\|^2.$ So we repeat this process for $f_1$, getting that
\begin{eqnarray}f_1(z)=A(1)E_m(z)+S^m f_2(z)\label{f1m}\end{eqnarray} with $f_2\in \ma{M}$ and
 $\|f_1\|^2=|A(1)|^2+\|f_2\|^2.$  Continuing recursively, we get
$$f_j(z)=A(j)E_m(z)+S^m f_{j+1}(z)$$ with $f_{j+1}\in \ma{M}$ and
 $\|f_j\|^2=|A(j)|^2 +\|f_{j+1}\|^2.$

Combining these equations together, it yields that
$$ f(z)= \left(\sum_{l=0}^{j} (z^{m})^{l}A(l)\right)E_m(z)
+(z^{m})^{(j+1)}f_{j+1}(z)$$ and $$\|f\|^2= \sum_{l=0}^j |A(l)|^2+\|f_{j+1}\|^2.$$
 Utilizing \cite[Lemma 3.3]{BT00} and following the similar proof process of \eqref{S2},  we deduce  that
\begin{align}f(z)&=\left(\sum_{l=0}^{\infty}(z^{m})^{l}A(l) \right)E_m(z)\nonumber\\&=\left(\sum_{l=0}^{\infty} a_{0}^l(z^{m})^l, \;\sum_{l=0}^{\infty}a_1^l(z^{m})^l, \; \cdots,\; \sum_{l=0}^{\infty} a_{m-1}^l(z^{m})^l\right)E_m(z),\;z\in\mathbb{D},\label{S2m}
\end{align} where the sums converge in $H^2$ norm and  $\|f\|^2=
 \sum_{l=0}^{\infty}|A(l)|^2.$
Observing from  \eqref{f1m} and \eqref{S2m}, it holds that \begin{eqnarray}f_1(z)= \left(\sum_{l=0}^{\infty}
  (z^{m})^{l}A(l+1)\right)E_m(z) \in \ma{M}.\label{f11m}\end{eqnarray}
Hereafter, we denote \begin{eqnarray} \Phi(z):= (\phi_0, \;\phi_1,\;\cdots,\;\phi_{m-1})(z)\label{Phi0m}\end{eqnarray}with
$\phi_i(z)=\sum_{l=0}^\infty a_i^lz^{l}$ for $0\leq i\leq m-1.$ By \eqref{S2m}, we  alternatively express that $f\in \ma{M}$ if and only if  $f(z)=\Phi(z^m)E_m(z)$ with  \begin{eqnarray}\|f\|^2= \|\Phi\|^2, \label{normm}\end{eqnarray}
and $\Phi(z)$ lies in an $S^*$-invariant subspace $K\subset H^2(\mathbb{D},\mathbb{C}^m)$. The closedness of $K$ can be deduced from \eqref{normm}, and  \eqref{f11m} entails $K$ is $S^*$-invariant.

Conversely, if $$\ma{M}=\left\{\Phi(z^m)E_m(z):\; \Phi(z)\in K\right\}$$ is a closed subspace of $H^2(\mathbb{D}),$ where $K$  is  $S^*$-invariant in $H^2(\mathbb{D},\mathbb{C}^m)$, then $\ma{M}$ is nearly $(S^m)^*$-invariant in $H^2(\mathbb{D})$.

Since $K$ is $S^*$-invariant in $H^2(\mathbb{D},\mathbb{C}^m)$,  Theorem \ref{Beurling-Lax} implies
$$K=K_{\Theta_{m\times r}}:=H^2(\mathbb{D},\mathbb{C}^m)\ominus \Theta_{m\times r}H^2(\mathbb{D},\mathbb{C}^r)$$ with    $0\leq r\leq m$ and  the inner function $\Theta_{m\times r}$ takes values in $\ma{L}(\mathbb{C}^r,\mathbb{C}^m)$. 
So there exists a unitary map  $J_m:\;\mathcal{M}\rightarrow K_{\Theta_{m\times r}}$ defined by
$$J_m\left(G(z^m) E_m(z)\right)=G(z),\;\mbox{for}\; \;G\in H^2(\mathbb{D},\mathbb{C}^m).$$

In sum, we characterize the nearly $(S^m)^*$-invariant subspaces in $H^2(\mathbb{D})$ as below.
\begin{prop}\label{prop nearSm}For $m\geq 3$, a subspace $\ma{M}$ of $H^2(\mathbb{D})$ is nearly $(S^m)^*$-invariant if and only if  there exists a uitary map
$$J_m:\;\mathcal{M}\rightarrow K_{\Theta_{m\times r}}\;\; \mbox{defined by}\;G(z^m) E_m(z)\mapsto G(z),$$ where  the inner function $\Theta_{m\times r}$ takes values in $  \ma{L}(\mathbb{C}^r,\mathbb{C}^m)$  for $0\leq r\leq m$.
\end{prop}

Next we present the nearly $(S^m)^*$ and $(S^{km+\gamma})^*$-invariant subspaces, which also include all nearly invariant subspaces for the non-cyclic shift semigroup $\{((S^m)^*)^n ((S^{km+\gamma})^*)^l:\; n,\;l\in \mathbb{N}_0,\;k\geq 1\}$.

\begin{thm}\label{nearSmr}For $m\geq 3$,  $k\geq 1$ and $\gamma\in \{1,\cdots, m-1\}$, a subspace $\ma{M}$ of $H^2(\mathbb{D})$ is  nearly $(S^m)^*$  and $(S^{km+\gamma})^*$-invariant  if and only if there exists a unitary map
\begin{align*}J_m:\;\mathcal{M}\rightarrow K_{\Theta_{m\times r}}\;\; \mbox{defined by}\;G(z^m) E_m(z)\mapsto G(z),\end{align*} where the inner function $\Theta_{m\times r}$ takes values in $\ma{L}(\mathbb{C}^r,\mathbb{C}^m)$  and satisfies  $\Theta_{m\times r}^{*} \Sigma_{m,\gamma}^k \Theta_{m\times r} \in H^{\infty}_{r \times r}$  for $0\leq r\leq m$.
 \end{thm}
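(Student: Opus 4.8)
The plan is to transcribe the proof of Theorem~\ref{nearS2k1}, replacing $2$ by $m$ and $1$ by $\gamma$. Since near $(S^m)^*$-invariance is all that Proposition~\ref{prop nearSm} requires, that proposition already furnishes the vector $E_m(z)=(e_0,\dots,e_{m-1})^T(z)$, an integer $r$ with $0\le r\le m$, an inner $\Theta_{m\times r}$, and the isometry $J_m:\mathcal{M}\to K_{\Theta_{m\times r}}$, so that $f\in\mathcal{M}$ precisely when $f(z)=\Phi(z^m)E_m(z)=\sum_{i=0}^{m-1}\phi_i(z^m)e_i(z)$ with $\Phi=(\phi_0,\dots,\phi_{m-1})\in K_{\Theta_{m\times r}}$, each nonzero $e_i$ having (by the reproducing-kernel Gram--Schmidt construction of $E_m$) its first nonvanishing Taylor coefficient in degree $i$. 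Writing $\phi_i(z)=\sum_{l\ge0}a_i^l z^l$, it then remains to show that, given this structure, near $(S^{km+\gamma})^*$-invariance of $\mathcal{M}$ is equivalent to $\Theta_{m\times r}^*\,\Sigma_{m,\gamma}^k\,\Theta_{m\times r}\in H^\infty_{r\times r}$.

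First I would take an arbitrary $f\in\mathcal{M}\cap S^{km+\gamma}H^2(\mathbb{D})$, i.e.\ $f$ vanishing to order $km+\gamma$ at $0$. Matching lowest-order terms in $\sum_i\phi_i(z^m)e_i(z)$ (the $i$-th summand has order $m\,\mathrm{ord}(\phi_i)+i$, and since these orders lie in distinct residue classes modulo $m$ there is no cancellation at the bottom) forces $\mathrm{ord}(\phi_i)\ge k+1$ for $0\le i\le\gamma-1$ and $\mathrm{ord}(\phi_i)\ge k$ for $\gamma\le i\le m-1$, generalising the conditions $a_l=0\ (l\le k)$ and $b_l=0\ (l\le k-1)$ of the case $m=2$. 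Hence $\phi_i(z^m)=(z^m)^{k+1}\big((S^{k+1})^*\phi_i\big)(z^m)$ for $i<\gamma$ and $\phi_i(z^m)=(z^m)^{k}\big((S^{k})^*\phi_i\big)(z^m)$ for $i\ge\gamma$, and dividing through by $z^{km+\gamma}$ yields, as in \eqref{Sk*kf},
\[
[(S^{km+\gamma})^*f](z)=\sum_{i=0}^{\gamma-1}\big((S^{k+1})^*\phi_i\big)(z^m)\,(S^{m-\gamma}e_i)(z)+\sum_{i=\gamma}^{m-1}\big((S^{k})^*\phi_i\big)(z^m)\,\big((S^{\gamma})^*e_i\big)(z).
\]
Transported to $H^2(\mathbb{D},\mathbb{C}^m)$ via $T_m$ and the commutative diagram \eqref{commutem} (together with the identity $S^{km+\gamma}T_m=T_m\Sigma_{m,\gamma}^k$ from the proof of Theorem~\ref{thm Sm}, whose adjoint reads $T_m^*(S^{km+\gamma})^*=(\Sigma_{m,\gamma}^k)^*T_m^*$), this says exactly that $[(S^{km+\gamma})^*f]\in\mathcal{M}$ if and only if $(\Sigma_{m,\gamma}^k)^*\Phi\in K_{\Theta_{m\times r}}$.

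The remaining step is the Hilbert-space computation that makes the matrix condition appear: for every $F\in H^2(\mathbb{D},\mathbb{C}^r)$,
\[
\big\langle (\Sigma_{m,\gamma}^k)^*\Phi,\ \Theta_{m\times r}F\big\rangle=\big\langle \Phi,\ \Sigma_{m,\gamma}^k\Theta_{m\times r}F\big\rangle=\big\langle \Phi,\ \Theta_{m\times r}\big(\Theta_{m\times r}^*\Sigma_{m,\gamma}^k\Theta_{m\times r}\big)F\big\rangle=0,
\]
the middle equality being legitimate exactly when $\big(\Theta_{m\times r}^*\Sigma_{m,\gamma}^k\Theta_{m\times r}\big)F\in H^2(\mathbb{D},\mathbb{C}^r)$, i.e.\ when $\Theta_{m\times r}^*\Sigma_{m,\gamma}^k\Theta_{m\times r}\in H^\infty_{r\times r}$, and the last equality holding because $\Phi\in K_{\Theta_{m\times r}}$. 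Thus this condition delivers $(\Sigma_{m,\gamma}^k)^*\Phi\in K_{\Theta_{m\times r}}$ and hence near $(S^{km+\gamma})^*$-invariance; conversely, near $(S^{km+\gamma})^*$-invariance forces $(\Sigma_{m,\gamma}^k)^*\Phi\in K_{\Theta_{m\times r}}$ for the admissible $\Phi$, equivalently $\Sigma_{m,\gamma}^k\Theta_{m\times r}H^2(\mathbb{D},\mathbb{C}^r)\subseteq\Theta_{m\times r}H^2(\mathbb{D},\mathbb{C}^r)$, so $\Theta_{m\times r}^*\Sigma_{m,\gamma}^k\Theta_{m\times r}=\Theta_{m\times r}^*\big(\Sigma_{m,\gamma}^k\Theta_{m\times r}\big)$ carries $H^2$ boundedly into $H^2$ and therefore lies in $H^\infty_{r\times r}$.

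I expect the main obstacle to be the bookkeeping hidden in the displayed formula for $(S^{km+\gamma})^*f$: one must split each monomial $z^{km+\gamma+j}$, $j=0,\dots,m-1$, according to its residue modulo $m$ and check that the $\gamma$ components $e_0,\dots,e_{\gamma-1}$ absorb an extra factor $z^m$ — producing the operator $(S^{k+1})^*$ and, on the $H^2(\mathbb{D},\mathbb{C}^m)$ side, the block $S^{k+1}I_{\gamma\times\gamma}$ of $\Sigma_{m,\gamma}^k$ — while the other $m-\gamma$ components produce $(S^{k})^*$ and the block $S^{k}I_{(m-\gamma)\times(m-\gamma)}$, and that the induced cyclic re-indexing of $e_0,\dots,e_{m-1}$ coincides with the action of $\Sigma_{m,\gamma}^k$ (the very permutation already present in \eqref{STNm}). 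The only other delicate point is the claim that $e_i$ has its first nonzero Taylor coefficient in degree $i$, which underpins the vanishing conditions on the $\phi_i$; this belongs to the structure theory behind Proposition~\ref{prop nearSm} and is checked just as in the case $m=2$.
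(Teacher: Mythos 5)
Your proposal reproduces the paper's own proof of Theorem \ref{nearSmr} essentially step for step: Proposition \ref{prop nearSm} supplies $E_m$, $\Theta_{m\times r}$ and the isometry $J_m$; the vanishing conditions on the $\phi_i$, the formula for $(S^{km+\gamma})^*f$ with the split into $(S^{k+1})^*\phi_i$ (for $i<\gamma$, paired with $S^{m-\gamma}e_i$) and $(S^k)^*\phi_i$ (for $i\geq\gamma$, paired with $(S^*)^\gamma e_i$), the reduction via $T_m$ to showing $(\Sigma_{m,\gamma}^k)^*\Phi\in K_{\Theta_{m\times r}}$, and the closing inner-product computation using $\Theta_{m\times r}^{*}\Sigma_{m,\gamma}^k\Theta_{m\times r}\in H^{\infty}_{r\times r}$ are exactly the paper's \eqref{Sm*f}--\eqref{aimfm}, and your deferral of the claim that $e_i$ has lowest-order term of degree $i$ matches the paper's implicit use of the same fact (its ``$u_0\neq 0$, $v_1\neq 0$'' in the $m=2$ case). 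The only divergence is your final ``conversely'' sentence, which the paper does not attempt: the passage from ``$(\Sigma_{m,\gamma}^k)^*\Phi\in K_{\Theta_{m\times r}}$ for the admissible $\Phi$'' to ``$\Sigma_{m,\gamma}^k\Theta_{m\times r}H^2(\mathbb{D},\mathbb{C}^r)\subseteq\Theta_{m\times r}H^2(\mathbb{D},\mathbb{C}^r)$'' is not an equivalence, since the admissible $\Phi$ (those coming from $\mathcal{M}\cap S^{km+\gamma}H^2(\mathbb{D})$) form only a proper subspace of $K_{\Theta_{m\times r}}$, so that direction would need a separate argument; as the paper, like the main body of your argument, only establishes that the matrix condition implies near $(S^{km+\gamma})^*$-invariance, this extra claim is beyond what the paper itself proves and should either be argued properly or dropped.
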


\begin{proof}
By Proposition \ref{prop nearSm}, the unitary map
$J_m:\mathcal M\to K_{\Theta_{m\times r}}$ gives
$f(z)=\Phi(z^m)E_m(z)\in \ma{M}$ with
$\Phi=(\phi_0,\cdots,\phi_{m-1})\in K_{\Theta_{m\times r}}$ as in \eqref{Phi0m}.

Assume that
$\Theta_{m\times r}^{*}\Sigma_{m,\gamma}^{k}
\Theta_{m\times r}\in H^\infty_{r\times r}$ and that
$f\in\mathcal M$ satisfies
$f^{(n)}(0)=0$ for $0\leq n\leq km+\gamma-1$.
By the definition of $\Sigma_{m,\gamma}^k$ and the relation
\eqref{commutem}, the condition
$(S^{km+\gamma})^*f\in\mathcal M$ is equivalent, under $J_m$, to
\begin{align}
(\Sigma_{m,\gamma}^{k})^*\Phi^{\mathsf t}
\in K_{\Theta_{m\times r}} .\label{aimmk}
\end{align}

We verify this inclusion. For any
$F\in H^2(\mathbb D,\mathbb C^r)$, the assumption on
$\Theta_{m\times r}$ implies that
$\Sigma_{m,\gamma}^{k}\Theta_{m\times r}F
=\Theta_{m\times r}G$ for some
$G\in H^2(\mathbb D,\mathbb C^r)$. Hence,
\[
\begin{aligned}
\left\langle
(\Sigma_{m,\gamma}^{k})^*\Phi^{\mathsf t},
\Theta_{m\times r}F
\right\rangle
&=
\left\langle
\Phi^{\mathsf t},
\Sigma_{m,\gamma}^{k}\Theta_{m\times r}F
\right\rangle  \\
&=
\left\langle
\Phi^{\mathsf t},
\Theta_{m\times r}G
\right\rangle=0 ,
\end{aligned}
\]
because $\Phi\in K_{\Theta_{m\times r}}$. Therefore, \eqref{aimmk} holds
and hence $\mathcal M$ is nearly
$(S^{km+\gamma})^*$-invariant.

Conversely, if $\mathcal M$ is nearly
$(S^{km+\gamma})^*$-invariant, then
$K_{\Theta_{m\times r}}$ is invariant under
$(\Sigma_{m,\gamma}^{k})^*$. Equivalently,
$\Sigma_{m,\gamma}^{k}\Theta_{m\times r}
H^2(\mathbb D,\mathbb C^r)
\subset
\Theta_{m\times r}H^2(\mathbb D,\mathbb C^r)$.
By the multiplier characterization of vector-valued Hardy spaces
(\cite[Theorem 4.14]{AM02}), this is equivalent to
$\Theta_{m\times r}^{*}\Sigma_{m,\gamma}^{k}
\Theta_{m\times r}\in H^\infty_{r\times r}$.
This completes the proof.
\end{proof}

As an application, we obtain simultaneous (near) invariance results for
discrete semigroups generated by several elements. More general cases
can be treated similarly.

\begin{rem}
Let $m\geq2$ and $n=k_nm+\gamma_n<l=k_lm+\gamma_l$, where
$n\nmid l$, $k_n,k_l\geq1$, and
$\gamma_n,\gamma_l\in\{1,\cdots,m-1\}$. The simultaneous invariance
under $S^m,S^n,S^l$, as well as the  near invariance under
$(S^m)^*,(S^n)^*,(S^l)^*$, follows directly from Theorem \ref{thm Sm}
and Theorem \ref{nearSmr}, respectively, by imposing
$\Theta_{m\times r}^{*}\Sigma_{m,\gamma_n}^{k_n}\Theta_{m\times r},
\Theta_{m\times r}^{*}\Sigma_{m,\gamma_l}^{k_l}\Theta_{m\times r}
\in H^\infty_{r\times r}$.
\end{rem}

This section ends with  a subspace simultaneously
invariant under $S^3,S^4$ and $S^5$.

\begin{exm}
Let $\mathcal M=\operatorname{span}\{1,z^3,z^4,z^5,\cdots\}
=\mathbb C\oplus z^3H^2(\mathbb D)$. Clearly, $\mathcal M$ is neither
$S$  nor $S^2$-invariant. Using the decomposition
$f(z)=f_0(z^3)+zf_1(z^3)+z^2f_2(z^3)$, every $g\in\mathcal M$ admits a
representation
 \begin{align*}g(z)&=g(0)+z^3(f_0(z^3)+zf_1(z^3)+z^2f_2(z^3))\\&=
g(0)+(Sf_0)(z^3)+z(Sf_1)(z^3)+z^2(Sf_2)(z^3)
\\&:=g_0(z^3)+zg_1(z^3)+z^2g_2(z^3),\end{align*}
where  $g_0(z)=g(0)+(zf_0) (z),\;g_1(z)= (zf_1) (z),\; g_2(z)=(zf_2)(z).$  Therefore,
$\mathcal M=T_3(\Theta_{3\times3}H^2(\mathbb D,\mathbb C^3))$, where
\[
\Theta_{3\times3}
=
\begin{pmatrix}
1&0&0\\
0&z&0\\
0&0&z
\end{pmatrix}.
\]

A direct computation gives  $$\Theta_{3\times 3}^* \Sigma_{3,1}^1\Theta_{3\times 3}=\left(\begin{array}{ccc}0& 0 &z^3\\1& 0& 0\\0 &z^2 &0\end{array}\right), \;\Theta_{3\times 3}^* \Sigma_{3,2}^1\Theta_{3\times 3}=\left(\begin{array}{ccc}0& z^3 &0\\ 0& 0& z^2\\1 &0 &0\end{array}\right)$$
and both matrices belong to $H^\infty_{3\times3}$. Hence, by
Theorem \ref{thm Sm}, $\mathcal M$ is invariant under
$S^3,S^4$ and $S^5$.  Then $\mathcal M^\perp=\operatorname{span}\{z,z^2\}
=T_3(K_{\Theta_{3\times3}})$ is invariant under
$(S^3)^*,(S^4)^*$ and $(S^5)^*$.
\end{exm}

\section{(Near) invariance for Toeplitz operators induced by Blaschke products}

Let $B_m$ be an $m$-degree Blaschke product,
\[
B_m(z)=\lambda\prod_{j=1}^{m}
\frac{z-z_j}{1-\overline{z_j}z},
\qquad |\lambda|=1,\quad |z_j|<1 .
\]
The model space $K_{B_m}=H^2(\mathbb D)\ominus B_mH^2(\mathbb D)$ has
dimension $m$. By the Wold decomposition,
\[
H^2(\mathbb D)=\bigoplus_{j\ge0}B_m^jK_{B_m},
\qquad
H^2(\mathbb D,\mathbb C^m)
=\bigoplus_{j\ge0}z^jK,
\]
where $K=H^2(\mathbb D,\mathbb C^m)\ominus zH^2(\mathbb D,\mathbb C^m)$.

Let $\{e_j\}_{j=1}^{m}$ and $\{\delta_j^m\}_{j=1}^{m}$ be orthonormal
bases of $K_{B_m}$ and $K$, respectively. The corresponding unitary
operator
$U:H^2(\mathbb D,\mathbb C^m)\rightarrow H^2(\mathbb D)$ is determined by
\[
U(B_m^je_l)=z^j\delta_l^m,\qquad j\ge0,\ 1\le l\le m .
\]
Together with the isometric isomorphism $T_m$ in \eqref{Tfgm}, we have
the intertwining relation
\[
S^m(T_mU)=(T_mU)T_{B_m}.
\]
Hence, for every $n\ge1$,
\[
(S^m)^n=(T_mU)(T_{B_m})^n(U^*T_m^{-1}).
\]

This unitary equivalence immediately gives the following reduction.

\begin{prop}\label{Bmn}
Let $m\ge2$ and $n\ge1$, a subspace $\mathcal M$ of $H^2(\mathbb D)$
is $(S^m)^n$-invariant (resp. nearly $(S^m)^{*n}$-invariant)
if and only if $(U^*T_m^{-1})\mathcal M$ is
$(T_{B_m})^n$-invariant (resp.  nearly
$(T_{B_m})^{*n}$-invariant).
\end{prop}

We next consider the simultaneous invariance under two powers of
$T_{B_m}$. Let $n<l$ and write
$l=nk+\gamma$, where $k\ge1$ and
$\gamma\in\{1,\cdots,n-1\}$. By Proposition \ref{Bmn}, this problem is
equivalent to the simultaneous invariance under
$S^{mn}$ and $S^{mnk+m\gamma}$. Therefore Theorem \ref{thm Sm}  with $m$ replaced by $mn$ and $\gamma$ replaced by
$m\gamma$ yield the following result.

\begin{cor}
Let $m\ge1$, $n\ge2$, $k\ge1$, and
$\gamma\in\{1,\cdots,n-1\}$, a subspace $\mathcal M$ of $H^2(\mathbb{D})$ is invariant under
both $(T_{B_m})^n$ and $(T_{B_m})^{kn+\gamma}$ if and only if
\[
\mathcal M=(U^*T_m^{-1})
\big[T_{mn}(\Theta_{mn\times r}H^2(\mathbb D,\mathbb C^r))\big],
\]
where $0\le r\le mn$, the inner function $\Theta_{mn\times r}$ takes values in $\mathcal{L}(\mathbb{C}^r, \mathbb{C}^{mn})$ and satisfies
$\Theta_{mn\times r}^{*}
\Sigma_{mn,m\gamma}^{k}
\Theta_{mn\times r}
\in H^\infty_{r\times r}. $
Moreover,
$ \mathcal M^\perp=(U^*T_m^{-1})
[T_{mn}(K_{\Theta_{mn\times r}})]
$  is invariant under both $(T_{B_m}^*)^n$ and
$(T_{B_m}^*)^{kn+\gamma}$.
\end{cor}

Similarly, Proposition \ref{Bmn} together with Theorem \ref{nearSmr}
give  the corresponding characterization of nearly invariant subspaces.

\begin{cor}
Let $m\ge1$, $n\ge2$, $k\ge1$, and
$\gamma\in\{1,\cdots,n-1\}$, a subspace $\mathcal M$ of $H^2(\mathbb{D})$ is nearly invariant under
both $(T_{B_m}^*)^n$ and $(T_{B_m}^*)^{kn+\gamma}$ if and only if there exists a unitary map
\[
J_{mn}:\;(T_mU)\mathcal M\longrightarrow K_{\Theta_{mn\times r}},
\qquad
G(z^{mn})E_{mn}(z)\mapsto G(z),
\]
where $0\le r\le mn$, $\Theta_{mn\times r}$ is inner  and satisfies $
\Theta_{mn\times r}^{*}
\Sigma_{mn,m\gamma}^{k}
\Theta_{mn\times r}
\in H^\infty_{r\times r}.$  Here $E_{mn}$ is obtained by orthonormalizing the reproducing kernel and
the first $mn-1$ derivative kernels at the origin.
\end{cor}

\textbf{Acknowledgements}\;\; The work was supported in part by the National Natural Science Foundation of China (Grant No. 12471126).


\begin{thebibliography}{}

\bibitem{AM02} J. Agler and J.E. McCarthy, Pick Interpolation and Hilbert Function Spaces, Graduate Studies in Mathematics, 44, American Mathematical Society, Providence, RI, 2002.

\bibitem{ABBH}A. Aleman,  A. Baranov, Y. Belov and H.  Hedenmalm, Backward shift and nearly invariant subspaces of Fock-type spaces, Int. Math. Res. Not. IMRN, 10(2022), 7390-7419.

\bibitem{ALS}K. Arshad, S. Lata  and D. Singh, Nearly invariant Brangesian subspaces,  Canad. Math. Bull., 68(1)(2025), 318--337.

\bibitem{BT00} C. Benhida and D.Timotin,  Finite rank perturbations of contractions, Integral Equations and Operator Theory, 36(3) (2000), 253--268.  

\bibitem{CaP1} M.C. C\^{a}mara and J.R. Partington, Near invariance and kernels of Toeplitz operators, Journal d'Analyse Math., 124(2014), 235--260.

\bibitem{CaP} M.C. C\^{a}mara and J.R. Partington, Finite-dimensional Toeplitz kernels and nearly-invariant subspaces, J. Operator Theory, 75(1)(2016), 75--90.

\bibitem{CaP2}  M.C. C\^{a}mara and J.R. Partington, Toeplitz kernels and model spaces. The diversity and beauty of applied operator theory, 139-153, Oper. Theory Adv. Appl., 268, Birkh\"auser/Springer, Cham, 2018.



\bibitem{CCP10} I. Chalendar, N. Chevrot and J.R. Partington,   Nearly invariant subspaces for backwards shifts on vector-valued Hardy spaces, J. Operator Theory, 63(2)(2010), 403--415.











\bibitem{GGP22}
E.A. Gallardo-Guti\'errez and J.R. Partington, Multiplication by a finite Blaschke product on weighted Bergman spaces: commutant and reducing subspaces, J. Math. Anal. Appl. 515(1)(2022),   Paper No. 126383.


\bibitem{GMR} S. Garcia, J. Mashreghi and W.T. Ross, Introduction to Model Spaces and Their Operators, Cambridge: Cambridge University Press, 2016.

\bibitem{HR}A. Hartmann and W.T. Ross, Truncated Toeplitz operators and boundary values in nearly invariant subspaces, Complex Anal. Oper. Theory, 7(1)(2013), 261--273.

\bibitem{Ha} E. Hayashi, The kernel of a  Toeplitz operator, Integral Equations and Operator Theory, 9(4)(1986), 588--591.


\bibitem{hitt} D. Hitt, Invariant subspaces of ${\ma H}^2$ of an annulus, Pacific J. Math., 134(1)(1988), 101--120.

\bibitem{LS97}
T.L. Lance and M.I. Stessin,   Multiplication invariant subspaces of Hardy spaces, Canad. J. Math., 49(1)(1997), 100--118.

\bibitem{Lax}
P.D. Lax,  Translation invariant spaces, Acta Math., 101(1959), 163--178.

\bibitem{LP2} Y. Liang and J.R. Partington, Nearly invariant subspaces for operators in Hilbert spaces, Complex Anal. Oper. Theory, 15(5)(2021), Paper No. 5.

\bibitem{LP3} Y. Liang and J.R. Partington, Nearly invariant subspaces for shift semigroups,  Sci. China Math., 65(9)(2022), 1895--1908.

\bibitem{LP4} Y. Liang and J.R. Partington, Cyclic nearly invariant
subspaces for semigroups of isometries, Math. Z., 307 (2024), Paper No. 58.

\bibitem{LP5} Y. Liang and J.R. Partington, Composition
operators between Toeplitz kernels,  Collect. Math. (2025), https://doi.org/10.1007/s13348-025-00495-7.

\bibitem{Nik} N. Nikolski, Operators,  Functions, and Systems: an easy reading. Vol. 1, volume 92 of Mathematical Surveys and Monographs, American Mathematical Society, Providence, RI, 2002. Hardy, Hankel, and Toeplitz, Translated from the French by Andreas Hartmann.

\bibitem{ryan}
R. O'Loughlin, Nearly invariant subspaces with applications to truncated Toeplitz operators, Complex Anal. Oper. Theory, 14(8)(2020), 10--24.

\bibitem{Par} J.R. Partington,  Linear Operators and Linear Systems, London Mathematical Society Student Texts, 60. Cambridge University Press, Cambridge, 2004.






\bibitem{Sa1} D. Sarason, Nearly invariant subspaces of the backward shift, Contributions to operator theory and its applications (Mesa, AZ, 1987), 481--493, Oper. Theory Adv. Appl. 35, Birkh\"{a}user, Basel, 1988.

\bibitem{Sa2} D. Sarason, Kernels of Toeplitz operators, Oper. Theory: Adv. Appl., 71(1994), 153--164.

\end{thebibliography}
\end{document}